\newcommand{\wt}{\widetilde}
\newcommand{\sm}{\smallsetminus}
\renewcommand{\th}{\theta}
\renewcommand{\phi}{\varphi}
\newcommand{\si}{\sigma}
\newcommand{\De}{\Delta}
\newcommand{\Si}{\Sigma}
\newcommand{\RR}{{\mathbb R}}
\newcommand{\Diff}{\operatorname{Diff}}
\renewcommand{\int}{\operatorname{int}}
\newcommand{\ConG}{\mathscr{C}}
\newcommand{\vConG}{\mathscr{V}\!\mathscr{C}}
\newtheorem{theorem}{Theorem}[section]
\newtheorem*{thmintro}{Theorem}
\newtheorem{lemma}[theorem]{Lemma}
\newtheorem{corollary}[theorem]{Corollary}
\theoremstyle{definition}     % this and the line below gives definitions in roman
\newtheorem{definition}[theorem]{Definition}
\theoremstyle{remark}
\newtheorem{remark}[theorem]{Remark}
\newtheorem{example}[theorem]{Example}
\newtheorem*{ackn}{Acknowledgements}
\newtheorem*{conv}{Conventions}
\begin{document}
\title[Concordance group of virtual knots]{Concordance group of virtual knots}

\author{Hans U. Boden}
\address{Mathematics \& Statistics, McMaster University, Hamilton, Ontario}
\email{boden@mcmaster.ca}

\author{Matthias Nagel}
\address{Université du Québec à Montréal, Montréal, Canada}
\email{nagel@cirget.ca}
\urladdr{http://thales.math.uqam.ca/~matnagel/}
 
\subjclass[2010]{Primary: 57M25, Secondary: 57M27}
\keywords{Concordance, slice knot, virtual knot.}
\date{\today}

\begin{abstract}
We study concordance of virtual knots. Our main result is that a classical knot~$K$ is virtually slice
if and only if it is classically slice. From this we deduce that 
the concordance group of classical knots embeds into
the concordance group of long virtual knots.
\end{abstract} 

\maketitle

%%%%%%%%%%%%%%%%%%%%%%%%%%%%%%%%%%%%%%%%%%%%%%%%%%%%%%%%%%%%%%%%%%%%%%%%%
\section{Introduction}
Virtual knot theory, discovered by Kauffman~\cite{Ka99}, is a nontrivial extension of
classical knot theory. Indeed,  
Goussarov, Polyak, and Viro proved that any two
classical knots are equivalent as virtual knots if and only if
they are equivalent as classical knots~\cite[Theorem 1.B]{GPV00}. 
Their result served to motivate many subsequent developments, because it predicted that many
classical knot and link invariants can be extended to virtual knots and links.

This result from~\cite{GPV00} was originally deduced from the classical Waldhausen's
theorem~\cite[Corollary 6.5]{Wa68},
but it can also be derived from Kuperberg's theorem~\cite{Ku03}. 
In the latter formulation, one represents virtual knots
geometrically as knots in thickened surfaces up to stable equivalence, and
Kuperberg's theorem tells us that the minimal genus representative is unique up to diffeomorphism.

Concordance of virtual knots has recently become an area of active interest, and many basic questions are still open.
One important question, which was raised both by Turaev~\cite[Section 2.2]{Tu08} and
by Kauffman~\cite[p.~336]{Ka15}, is the following:
\emph{If two classical knots are concordant as virtual ones, are they concordant in the usual sense?}
Our main result gives an affirmative answer to this question.
\begin{thmintro}
If two classical knots are concordant as virtual knots, then they are also concordant 
as classical knots.
\end{thmintro}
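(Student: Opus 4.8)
The plan is to reduce the theorem to the slicing statement of the abstract and then to prove that by stripping down the ambient four-manifold of a virtual slice disk, running the argument of \cite{GPV00} one dimension higher. For the reduction: working in the concordance group of long virtual knots, where connected sum is defined and the classical concordance group maps in compatibly with $\#$, the hypothesis that $K_0$ and $K_1$ are virtually concordant is equivalent to $J := K_0 \# (-K_1)$ being virtually slice — here $-K_1$ denotes the reverse mirror image of $K_1$ — and the conclusion that they are classically concordant is equivalent to $J$ being classically slice. (For ordinary, non-long virtual knots the same reasoning is applied directly to a concordance cobordism, with a product annulus in place of the disk used below.) So it suffices to show that a classical knot which is virtually slice is classically slice. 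Fix then a classical knot $K \subset S^3$ and a smoothly properly embedded disk $D \subset V := W \times [0,1]$, where $W$ is a compact connected oriented $3$-manifold with $\partial W = \Sigma$ and $\partial D = K$ lies in a $3$-ball $B \subset \Sigma \times [0,1] \subset \partial V$.

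\emph{Stripping down the slicing.} The goal is to whittle the pair $(W, D)$ down until $W$ is a $3$-ball; then $V$ is a standard $B^4$ and $D$ is a classical slice disk. Since $K$ is classical, Kuperberg's theorem \cite{Ku03} first allows us, after a controlled isotopy and destabilisations that compress the excess genus of $\partial W$, to assume that $\Sigma = S^2$ and that $(B, K)$ is the standard ball-pair of $K$. It then remains to kill $\pi_1(W)$ and $H_2(W)$, which I would do by surgering $W$ along framed circles and embedded $2$-spheres chosen in a level of $W$ away from the collar carrying $B$, adjusting $D$ in tandem. The surgery curves and spheres meet $D$ in circles, each innermost in the three-dimensional level set in which it lies; and — this is the crucial use of the classical hypothesis — because $K$ is classical, the pieces into which $D$ cuts that level set are irreducible, so each such circle bounds a disk in the level set as well as on $D$ and can be removed by an isotopy of $D$. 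After these surgeries $W$ is a simply connected homology $3$-ball bounded by $S^2$, hence $W = B^3$, and the disk produced along the way is a classical slice disk for $K$.

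\emph{The main obstacle.} The genuinely delicate point is exactly this compatibility between simplifying $W$ and keeping the slice disk embedded. A priori $D$ is spread throughout $W \times [0,1]$, and since $W \times [0,1]$ deformation retracts onto $W$ one cannot simply contract the topology of $W$ away: the disk must be pushed clear of each surgery region by hand. That innermost-disk manoeuvre is the heart of the argument, and it succeeds precisely because $K$ is classical, so the three-manifolds encountered along the way are irreducible — this is the four-dimensional analogue of the role played by Kuperberg's uniqueness theorem in the equivalence statement of \cite{GPV00}. A secondary bookkeeping point is to check that the destabilisations of $\partial W$, and the framings of the surgery curves, can always be arranged without damaging $D$; granting those, the remaining steps are routine general-position and handle manipulations.
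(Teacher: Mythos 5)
Your reduction to the slice statement (pass to $K_0 \# (-K_1)$, long knots to make $\#$ well defined) matches the paper's Corollary~\ref{cor:ClassicVSlice} and is fine. The gap is in the core step, and it is exactly the step you flag as ``the main obstacle'': you never actually solve it, and the justification you offer does not hold. First, classicality of $K$ gives no control whatsoever over the filling $W$: in Definition~\ref{defn:VirtualSlice} the only constraint is $\partial W \cong S^2$, so $W$ can be, say, a punctured connected sum of lens pieces, and neither $W$, nor the level sets $W\times\{t\}$, nor ``the pieces into which $D$ cuts that level set'' are irreducible --- indeed the last phrase is dimensionally confused, since $D$ meets a $3$-dimensional level generically in a $1$-manifold, surgery circles in $W\times\{t\}$ generically miss the $2$-disk $D$ in the $4$-manifold altogether, and surgery spheres meet it in points, so there is no innermost-circle argument of the kind you describe. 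Second, even granting some intersection pattern of circles (e.g.\ of $D$ with the $3$-dimensional trace $S\times I$ of a surgery sphere), there is no argument that these circles can be removed: the algebraic obstructions to pushing $D$ off the surgery region are precisely the difficulty, and ``routine general-position and handle manipulations'' do not dispose of them. Finally, Kuperberg's theorem is not what lets you take $\Sigma=S^2$; that follows from Lemma~\ref{lem:ConcordanceWellDefined} (sliceness is independent of the chosen representative), and Kuperberg plays no role in the paper's proof.

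The paper's route is genuinely different and avoids modifying $W$ or $D$ at all. Since the slice disk $\Delta\subset W\times I$ is contractible, it lifts to the universal cover $\wt W\times I$. The technical content (Lemmas~\ref{lem:PrimeNeat} and~\ref{lem:NeatExists}) is that for any compact oriented $W$ with $\partial W\cong S^2$, the universal cover $\wt W$ admits a \emph{compression}, i.e.\ an embedding into $D^3$ carrying one boundary sphere diffeomorphically to $S^2$: for prime $W$ this follows from geometrisation (the universal cover of the capped-off manifold is $S^3$, $S^2\times\RR$, or $\RR^3$, each of which embeds in $S^3$), and the general case is assembled along the lifted prime decomposition by a maximality argument, gluing compressions of the prime pieces into complementary balls. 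Composing, the lifted disk lands in $D^3\times I\cong D^4$, giving a classical slice disk directly. If you want to salvage your surgery-based outline, you would need a genuine mechanism for making $\Delta$ disjoint from the surgery regions, and no such mechanism is currently in your argument; the covering-space/embedding strategy is the way the paper sidesteps this entirely.
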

 
This result can be viewed as the analogue in concordance of the earlier
result of Goussarov, Polyak, and Viro~\cite{GPV00}, and consequently we hope
that it will stimulate further research on the problem of extending concordance
invariants from the classical to the virtual setting.   
In fact, there are already exciting new developments along these lines, for
instance the extension of the Rasmussen $s$-invariant to virtual knots given by
Dye, Kaestner, and Kauffman~\cite{DKK14}. 

We give a brief overview of the rest of the paper.
In Section~\ref{section2}, we introduce virtual knots as knots in thickened
surfaces up to stable equivalence. We recall Turaev's definition of virtual
knot concordance in Subsection~\ref{subsection2-2}, and we state and prove our
main result in Subsection~\ref{subsection2-3}. In Section \ref{section3}, we
introduce long virtual knots and construct the \emph{virtual knot concordance
group} $\vConG.$ We show that a long virtual knot $K$ is virtually slice if and
only if its closure $\overline{K}$ is, and we use it to deduce injectivity of
the natural homomorphism $\psi \colon \ConG \to \vConG$ from the classical
concordance group to the virtual concordance group.

\begin{conv}
All manifolds are assumed smooth and all knots are assumed oriented. Throughout the paper,  we work with smooth concordance.
\end{conv}

%%%%%%%%%%%%%%%%%%%%%%%%%%%%%%%%%%%%%%%%%%%%%%%%%%%%%%%%%%%%%%%%%%%%%%%%%

\section{Virtual knots and concordance} \label{section2}
In this section, we introduce stable equivalence of knots in thickened surfaces
and use them to define virtual knots. This gives rise to a natural notion of
concordance for virtual knots, which allows for a bordism between the two
surfaces whose thickenings contain representatives of the two virtual knots,
and requires also an embedded annulus cobounding the two knots.
 
\subsection{Diagrams and stable equivalence} 
It will be convenient for us to regard virtual knots geometrically as knots in
thickened surfaces, and we take a moment to explain this point of view.
\begin{definition}
A \emph{thickened surface}~$\Si \times I$ is a product of a closed, connected,  oriented surface~$\Si$ with
the interval~$I=[-1,1]$.
A knot~$K$ in a thickened surface~$\Si \times I$ is a $1$-dimensional submanifold~$K$
in the interior of $\Si \times I$ which is diffeomorphic to a circle. 
\end{definition}
Just as classical knots in $S^3$ are considered up to ambient isotopy, we consider knots in thickened surfaces up 
to stable equivalence~\cite{CKS02}. We take a moment to recall this carefully. 

\begin{definition}\label{def:StableEquiv}
\emph{Stable equivalence} on knots in thickened surfaces is generated by the
following operations, which transform a given knot $K$ in a thickened surface
$\Si \times I$ into a new knot $K'$ in a possibly different thickened surface
$\Si' \times I$.
\begin{enumerate}
\item\label{DiffEquiv}  
Let $f\colon \Si \times I \to \Si' \times I$ be an orientation-preserving diffeomorphism
sending the orientation class of $\Si$
to that of $\Si'$. (Notice that this implies that $f(\Si \times \{1\}) = \Si' \times \{1\}$ and $f(\Si \times \{-1\}) = \Si' \times \{-1\}$.) The knot $K'=f(K)$ in~$\Si' \times I$ is said to be obtained from $K$ in $\Si \times I$ by a \emph{diffeomorphism}.
\item  Let $h \colon S^0 \times D^2 \to \Si$ be the attaching region for a
$1$-handle that is disjoint from the image of $K$ under projection $\Si \times
I \to \Si$, then  $0$-surgery  on $\Si$ along $h$ is the surface 
\[ \Si' := \Si \sm h(S^0 \times D^2) \cup_{S^0 \times S^1} D^1 \times S^1. \] 
The knot~$K'$ is the image of the knot $K$ in $\Si' \times I$, and we say that 
it is the knot obtained from $K$ by \emph{stabilisation}.
\item \emph{Destabilisation} is the inverse operation, and it involves cutting
$\Si \times I$ along a \emph{vertical annulus}~$A$ and attaching two copies of $D^2 \times I$ along the two annuli. 
If the resulting thickened surface is disconnected, then we keep only the component containing $K$.   
\end{enumerate}
Note that in (3), an annulus $A$ in $\Si \times I$ is called \emph{vertical} if there is an embedded circle~$\gamma \subset \Si$ such that $A=\gamma \times I \subset \Si \times I$.
An equivalence class under the equivalence relation generated by (1), (2), and (3) above is called a \emph{virtual knot}. 
\end{definition}

Virtual links admit a similar description as links in $\Si \times I$, though $\Si$ need not be connected. We abuse notation slightly and use $K$ for the virtual knot, so $K$ refers to an equivalence class of knots in thickened surfaces.

Given a virtual knot $K$, then any knot  in its equivalence class will be
called a \emph{representative} for $K$. A representative is therefore a knot in
a thickened surface $\Si \times I$. 
\begin{definition}
The \emph{virtual genus} of a virtual knot~$K$ is the minimum
\[ vg(K) := \min \{g(\Si)\mid \text{$\Si \times I$  contains a representative for $K$}\}, \]
where $g(\Si)$ denotes the genus of the surface $\Si$.
\end{definition} 

A classical knot~$K \subset S^3$ can be isotoped to be disjoint from the two points~$\{0, \infty\}$.
Thus, we can view it as a knot  in the thickened surface~$S^2 \times I$. The associated virtual knot
is independent of the choice of isotopy, and we call such a knot \emph{classical}.
Therefore a virtual knot is classical if and only if its virtual genus is zero.

Kuperberg~\cite[Theorem 1]{Ku03} proved a strong uniqueness result for
minimal genus representatives.  Namely, he showed that if $(K,\Si \times I)$ and 
$(K',\Si' \times I)$ are two minimal genus representatives for the same virtual knot,
then $K'=f(K)$ for some diffeomorphism $f\colon \Si \times I \to \Si' \times I$ as in (\ref{DiffEquiv}) of Definition~\ref{def:StableEquiv} above.

For the sake of completeness, we relate the geometric definition of virtual knots to the usual 
diagrammatic definition.

A \emph{virtual knot diagram} is a regular immersion of the circle $S^1$ in the plane
$\RR^2$ with double points that are either classical or virtual. Real
crossings are drawn with one arc over the other whereas virtual crossings are
drawn with circles around them.

Two virtual knot diagrams are equivalent if they are related by planar
isotopies and \emph{generalised Reidemeister moves}. These consist of the three
usual Reidemeister moves together with three additional moves just like the
usual Reidemeister moves but with only virtual crossings, and one more move
called the mixed move which is depicted in Figure~\ref{MM}. A virtual knot is
defined to be an equivalence class of virtual knot diagrams, and virtual links
are defined similarly as  equivalence classes of virtual link diagrams.

\begin{figure}[ht]
\centering
\includegraphics[scale=0.65]{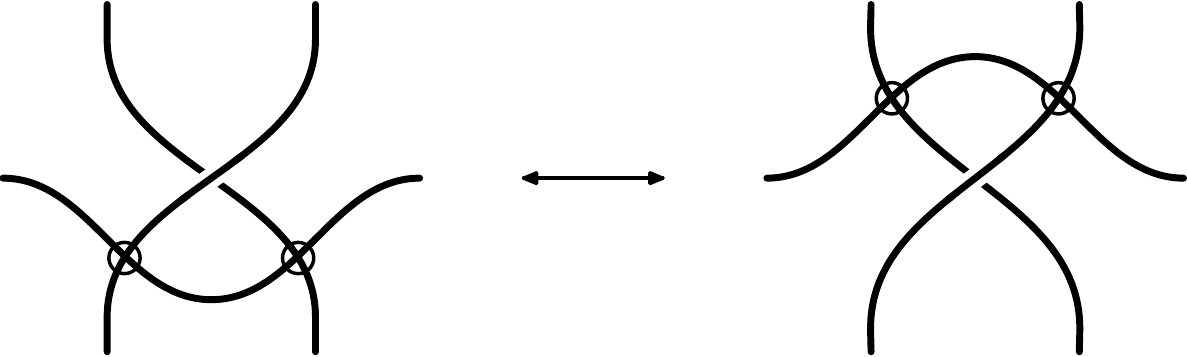}
\caption{The mixed move.}
\label{MM}
\end{figure}      

Given a virtual knot diagram~$D$ of a virtual knot~$K$, there is a canonical surface~$\Si$ called the
Carter surface constructed from $D$ as follows~\cite{KK00}:  attach two
intersecting bands at every classical crossing and two non-intersecting bands
at every virtual crossing as in Figure~\ref{band-surface}. Attaching
non-intersecting and non-twisted bands along the remaining arcs of $D$, and
filling in all boundary components with 2-disks, we obtain a closed oriented
surface~$\Si$ whose thickening~$\Si \times I$ contains a representative of $K$.
Conversely, let $K$ be a knot in a thickened surface $\Si \times I$ and $U \subset \Si$ 
a neighbourhood of the image of $K$ under projection $\Si \times I \to \Si$. 
If $f\colon U \to \RR^2$ is an orientation preserving immersion,
then the image of $K$ under $f$ is a virtual knot diagram $D$ whose classical
crossings correspond to those of $K$ and whose virtual crossings, which are the
rest of them, are the result of the immersion $f$. This virtual knot diagram
$D$ depends on the choice of immersion $f$, but any two such diagrams are
equivalent via detour moves~\cite{Ka15}.

This establishes a one-to-one correspondence between virtual knot
diagrams modulo the generalised Reidemeister moves and knots in thickened
surfaces up to stable equivalence~\cite{CKS02}.

\begin{figure}[h]
\centering
\includegraphics[scale=0.65]{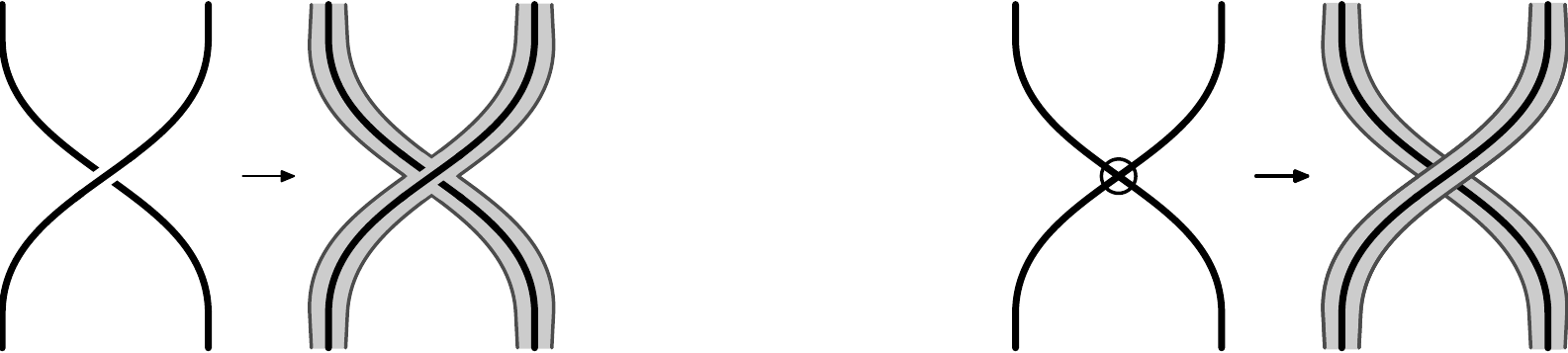}
\caption{The bands for classical and virtual crossings.}
\label{band-surface}
\end{figure}

%%%%%%%%%%%%%%%%%%%%%%%%%%%%%%%%%%%%%%%%%%%%%%%%%%%%%%%%%%%%%%%%%%%%%%%%%

\subsection{Virtual concordance}  \label{subsection2-2}
In this section, we define concordance and sliceness for virtual knots in terms
of their representative knots in thickened surfaces.  
We follow Turaev~\cite[Section 2.1]{Tu08} in defining virtual knot concordance.
 
If $K$ is an oriented knot in a thickened surface $\Si \times I$, its reverse is the 
knot $K^r$  obtained by changing the orientation of $K$, and its mirror image is the knot $K^m$ obtained by changing the orientation of $\Si \times I$.
These operations commute with one another, and we use
$-K=K^{rm}$ to denote the knot obtained by taking the
mirror image of the reverse knot.
\begin{definition}\label{defn:VirtualSlice}
\begin{enumerate}
\item Two given knots $K_0 \subset \Si_0 \times I$ and $K_1 \subset \Si_1 \times I$ 
in thickened surfaces are \emph{virtually concordant} if there exists a connected  
oriented $3$-manifold~$W$ with  
$\partial W \cong -\Si_0 \sqcup \Si_1$ and an annulus~$A \subset W \times I$
cobounding $-K_0$ and $K_1$.

\item A knot $K \subset \Si \times I$ is called \emph{virtually slice} if it is
concordant to the unknot. Equivalently, the knot $K$ is virtually slice if
there exists a connected $3$-manifold~$W$ with   
$\partial W \cong \Si$ and a 2-disk~$\De \subset W \times I$ cobounding $K$. We call $\De$
a \emph{slice disk} for $K$.
\end{enumerate}
\end{definition}

Clearly, virtual concordance is an equivalence relation on knots in thickened
surfaces. For instance, transitivity follows by stacking the two concordances
in the usual way. The next result shows that two stably equivalent knots are
virtually concordant to one another.  Thus, it follows that virtual
concordance defines an equivalence relation on virtual knots. 

\begin{lemma}\label{lem:ConcordanceWellDefined}
Suppose $K_0 \subset \Si_0\times I$ and $K_1 \subset \Si_1\times I$ represent the same virtual knot.
Then $K_0$ and $K_1$ are virtually concordant.
\end{lemma}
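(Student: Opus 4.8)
The plan is to use that virtual concordance is already known to be an equivalence relation on knots in thickened surfaces, so that it suffices to treat the three generating moves of Definition~\ref{def:StableEquiv} one at a time and then compose the resulting concordances along an arbitrary sequence of moves. In each case the concordance will live in $W\times I$ for a connected oriented $3$-dimensional cobordism $W$ from $\Si_0$ to $\Si_1$, so that $\partial W\cong -\Si_0\sqcup\Si_1$ automatically, and the annulus will be the product annulus swept out by the knot over the cobordism direction of $W$.

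More precisely: for a diffeomorphism $f\colon\Si_0\times I\to\Si_1\times I$ as in Definition~\ref{def:StableEquiv}(1), I take $W=\Si_0\times[0,1]$ with its top boundary reidentified with $\Si_1$ via $f$, and $A=K_0\times[0,1]\subset(\Si_0\times I)\times[0,1]=W\times I$; the bottom end of $\partial A$ is $K_0$ and the top end, read through $f$, is $f(K_0)=K_1$. For a stabilisation, $W$ is $\Si_0\times[0,1]$ with a three-dimensional $1$-handle attached to $\Si_0\times\{1\}$ along the attaching region $h$, so that the top surface becomes the surgered surface $\Si_1$; since $h$ is disjoint from the projection of $K_0$, the product annulus $A=K_0\times[0,1]$ lies in $W\times I$ and cobounds $K_0$ with $K_1$, which is literally the same knot. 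Destabilisation along a vertical annulus $\gamma\times I$ is handled the same way, now with a three-dimensional $2$-handle attached to $\Si_0\times\{1\}$ along an annular neighbourhood of $\gamma$; the hypothesis that $K_0$ is disjoint from $\gamma\times I$ keeps the product annulus off the handle, and if the top surface becomes disconnected I cap the component not meeting $K_0$ with a handlebody, which keeps $W$ connected, leaves $A$ undisturbed, and makes $\partial W\cong -\Si_0\sqcup\Si_1$ on the nose.

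The one point that really requires care is to check that in each case $A$ cobounds $-K_0$ and $K_1$ in the precise sense of Definition~\ref{defn:VirtualSlice}, and not, say, $K_0$ and $K_1$: the mirror is taken with respect to the ambient orientation, which is reversed on the incoming boundary $-\Si_0$, and the reverse enters because the two ends of an oriented annulus inherit opposite boundary orientations. This is exactly the orientation computation showing that the product cobordism $\Si\times[0,1]$ with annulus $K\times[0,1]$ realises reflexivity of virtual concordance, and it applies verbatim near each boundary component above, where $W\times I$ is collared as $(\Si_i\times I)\times[0,\ep)$ and $A$ is the product $K_i\times[0,\ep)$. I expect this bookkeeping, together with the minor disconnection wrinkle in the destabilisation case, to be the only real obstacle; everything else is a direct construction.
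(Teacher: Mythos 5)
Your proposal is correct and follows essentially the same route as the paper: reduce to the three generating operations of Definition~\ref{def:StableEquiv} and exhibit a cobordism $W$ with a product-type annulus in $W\times I$ for each, then compose using transitivity. The paper's proof simply asserts that each case can be verified, so your explicit constructions (mapping cylinder for diffeomorphisms, $1$-handle for stabilisation, $2$-handle plus capping off the discarded component for destabilisation, with the orientation bookkeeping for $-K_0$) are exactly the details the paper leaves to the reader.
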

\begin{proof}
It is enough to find a 3-manifold $W$ and an annulus $A \subset W \times I$ realising a concordance between 
knots in surfaces transformed into each other by one of the operations generating
stable equivalence, see Definition~\ref{def:StableEquiv}.

One can verify that this is possible in each case.
\end{proof}

Kauffman~\cite{Ka15} re-expressed concordance purely in terms of virtual knot diagrams. A
\emph{concordance} between two virtual knot diagrams $K_0$ and $K_1$ consists of
a series of generalised Reidemeister moves together with a collection of saddle
moves, births, and deaths that transform $K_0$ into $K_1$.
As usual, one requires
that the total number of births and deaths equals the number of saddle moves, see \cite[Section 3]{Ka15}. 
This condition is equivalent to the requirement that the knots cobound an annulus. 

\begin{example} 
To illustrate this, we recall from \cite{Ka15} the argument that the Kishino knot $K$ is virtually slice. To see this, perform a saddle move along the dotted line on the left of Figure~\ref{Kishino-slice}. The result is a virtual link diagram  on the right,  
which is easily seen to be equivalent to the unlink with two components.
Filling them in with 2-disks gives a slice disk for $K$, showing that the Kishino knot is virtually slice.
\begin{figure}[ht]
\centering
\includegraphics[scale=1.60]{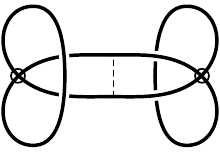} \qquad \qquad \qquad \includegraphics[scale=1.60]{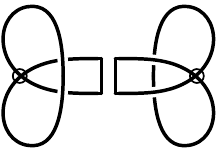}
\caption{Slicing the Kishino knot.}
\label{Kishino-slice}
\end{figure}
\end{example}

Although it is not immediately obvious, Kauffman's diagramatic notion of
virtual concordance is equivalent to Definition~\ref{defn:VirtualSlice}.
Indeed, the equivalence of the two definitions of virtual knot concordance had
been established previously by Carter, Kamada, and Saito \cite[Lemma 12]{CKS02}. 
We also refer to~\cite[Section 1.2]{CK15} for further discussion
on this point, and we thank Micah Chrisman for sharing this observation.

\subsection{Virtual concordance of classical knots} \label{subsection2-3}

Suppose $K \subset S^3$ is a classical knot and suppose that $K$ is slice. As
explained earlier, we can view $K$ as a virtual knot by arranging that $K$ lies
in a neighbourhood of the standard sphere $S^2 \times I \subset S^3$. If $\De \subset B^4$ is a slice disk for $K$,
then we can further arrange that $\De$ lies in $(S^2 \times I) \times I \subset
B^4$. Thus, $K$ is seen to be virtually slice in the sense of
Definition~\ref{defn:VirtualSlice}. 

The next theorem is our main result, and it gives a characterisation of virtual sliceness for classical knots.

\begin{theorem}\label{thm:ClassicVSlice}
A classical knot is virtually slice if and only if it is slice.
\end{theorem}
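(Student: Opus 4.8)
The forward implication is the one indicated just before the statement: a smooth slice disk for a classical knot $K\subset S^3$ can be pushed into $(S^2\times I)\times I\subset B^4$, so $K$ is virtually slice. The plan for the converse is as follows.

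Suppose the classical knot $K$ is virtually slice. Since $vg(K)=0$, choose a representative of $K$ in $S^2\times I$; by Definition~\ref{defn:VirtualSlice} there is a connected oriented $3$-manifold $W$ with $\p W\cong S^2$ together with a disk $\De\subset W\times I$ such that $\p\De=K\subset S^2\times I\subset\p(W\times I)$. Set $\wh W:=W\cup_{S^2}B^3$, a closed oriented $3$-manifold. The goal is to show that one may take $\wh W=S^3$, i.e.\ $W=B^3$; then $W\times I=B^4$ and $\De$ is a genuine slice disk for $K$ in the standard $B^4$.

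After an isotopy rel boundary we may assume $K$ lies in a ball $B\subset S^2\times I$ disjoint from $S^2\times\p I$, and that a collar of $\p\De$ in $\De$ lies in the boundary collar $(S^2\times I)\times[0,\ep)$. Note that $\De$ is automatically disjoint from $W\times\p I$, since $\p\De\subset S^2\times I$. Now $\p(W\times I)$ is the connected sum $(-\wh W)\#\wh W$, with the two punctured summands appearing as $W\times\{0\}$ and $W\times\{1\}$ and with $K$ contained in a ball in the connecting neck. Since $\wh W$ can be brought to $S^3$ by surgery along a framed link, I would realise the surgeries on the $-\wh W$ side by framed curves in $W\times\{0\}$ and those on the $\wh W$ side by framed curves in $W\times\{1\}$. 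These curves miss $K$ and miss $\De$, so the corresponding $2$-handle attachments enlarge $W\times I$ to a compact oriented $4$-manifold $X$ with $\p X=S^3$, inside which $\De$ is still a properly embedded disk with $\p\De=K\subset B\subset S^3$.

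The main obstacle — and the step requiring genuine care — is to perform this simplification so that the resulting $X$ is the \emph{standard} $B^4$, rather than merely a homotopy or homology $4$-ball; since we work smoothly, there is no recourse to Freedman's theorem. A naive surgery presentation of $\wh W$ produces $4$-manifolds with surviving second homology — for instance when $\wh W$ is an integral homology sphere, the classes of $H_2(X)$ created by the $2$-handles need not die. So I would instead reduce $\wh W$ by economical moves: strip off $S^1\times S^2$-summands, then deal with the irreducible summands one at a time, choosing at each stage handle attachments that cancel against the homology already carried by $W\times I$ rather than introducing new homology, precisely so that $X$ remains a standard $B^4$. This is the heart of the argument, and it is naturally organised as an induction on a complexity of $\wh W$ — its Heegaard genus, say, or the length of its prime decomposition — with the disk $\De$ supplying the compressing data that drives each reduction, in the spirit of the destabilisations along vertical annuli in Definition~\ref{def:StableEquiv}. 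Once $\wh W$ is reduced to $S^3$, the $4$-manifold is $B^4$ and $\De$ exhibits $K$ as slice.
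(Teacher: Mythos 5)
The forward direction is fine, but the converse — the actual content of the theorem — is not proved: the step you yourself call ``the heart of the argument'' is only a wish. You propose to attach $2$-handles to $W\times I$ realising a surgery presentation of $\wh W$ to $S^3$ on each boundary copy of $W$, and then to choose these attachments so ``economically'' that the result is the standard $B^4$. Nothing in the proposal makes this plausible, and in the crucial case it cannot work as described: if $\wh W$ is a homology sphere (say the Poincar\'e sphere), then $H_2(W\times I)=0$, so there is nothing for the new handles to ``cancel against''; every $2$-handle attached along a null-homologous curve with nonzero framing creates fresh second homology, so the naive $X$ is never $B^4$, and no mechanism is offered that avoids this. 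Likewise, the suggestion that $\De$ ``supplies the compressing data'' for an induction on Heegaard genus or on the prime decomposition is unsupported: $\De$ is an arbitrary disk in the $4$-dimensional thickening with boundary the classical knot $K$; it imposes no compression on $\wh W$ and there is no argument extracting destabilisations of $W$ from it. In effect, the deferred step is the whole theorem, restated as the assertion that the pair $(W\times I,\De)$ can be traded for $(B^4,\De')$.

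The paper takes a genuinely different route that avoids simplifying $W$ at all. It passes to the universal cover $\wt W\to W$ and proves (Lemmas~\ref{lem:PrimeNeat} and~\ref{lem:NeatExists}) that $\wt W$ admits a \emph{compression}, i.e.\ a smooth embedding $\phi\colon\wt W\to D^3$ carrying one boundary sphere diffeomorphically onto $\p D^3$; this uses the prime decomposition, geometrisation (via Scott) and the Hass--Rubinstein--Scott theorem for the prime pieces, assembled by a maximality argument over ``chunked'' submanifolds. Since $\De$ is simply connected it lifts to $\wt\De\subset\wt W\times I$ with boundary the same knot $K$ in $T\times I$ for the distinguished boundary sphere $T$, and then $\phi\times\mathrm{id}$ carries $\wt\De$ into $D^3\times I\cong D^4$, exhibiting $K$ as slice. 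If you want to salvage your outline, you would need to replace the handle-attachment step by an argument of comparable strength — for instance exactly such an embedding result for (a cover of) $W$ — since producing a standard $B^4$ by surgering $W\times I$ is not easier than the theorem itself.
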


An immediate consequence of this theorem is that there are infinitely many
distinct virtual concordance classes of virtual knots. This fact had been
noted by Turaev using his polynomial invariants $u_\pm(K)$ 
\cite[Theorems 1.6.1 and 2.3.1]{Tu08}, but Theorem~\ref{thm:ClassicVSlice} gives infinitely 
many distinct virtual concordance classes for which $u_\pm(K)$ all vanish.
 
\begin{corollary} \label{cor:ClassicVSlice}
Two classical knots are virtually concordant if and only if they are concordant as classical knots.
\end{corollary}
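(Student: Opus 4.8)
The plan is to deduce the corollary from Theorem~\ref{thm:ClassicVSlice} by transporting the usual ``concordance $=$ sliceness of the difference'' formalism to the virtual setting.

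First, the easy implication: if two classical knots $K_0,K_1\subset S^3$ are classically concordant, then the concordance annulus $C\subset S^3\times[0,1]$ can be pushed off the two arcs $\{0\}\times[0,1]$ and $\{\infty\}\times[0,1]$ (a dimension count suffices, keeping $\partial C$ fixed), and hence isotoped into $(S^2\times I)\times[0,1]$, where $S^2\times I\subset S^3$ is the standard thickened sphere used to regard $K_0,K_1$ as virtual knots. Reordering the interval factors exhibits $C$ as an annulus in $W\times I$ with $W=S^2\times[0,1]$, a connected $3$-manifold with $\partial W\cong -S^2\sqcup S^2$; this is a virtual concordance from $K_0$ to $K_1$. (The case $K_1=$ unknot is precisely the remark, made just before Theorem~\ref{thm:ClassicVSlice}, that slice classical knots are virtually slice.)

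For the converse, suppose the classical knots $K_0$ and $K_1$ are virtually concordant. The classical connect sum $J:=K_0\#(-K_1)$ is again a classical knot, and I claim it is virtually slice. Granting this, Theorem~\ref{thm:ClassicVSlice} shows $J$ is classically slice, and the standard structure of the classical concordance group then yields that $K_0$ and $K_1$ are classically concordant, as desired. To prove the claim I use the following compatibility: local connect sum with a \emph{classical} knot $L$ sends virtually concordant knots to virtually concordant knots. Indeed, given a virtual concordance annulus $A\subset W\times I$, one chooses a standard $4$-ball in $W\times I$ meeting $A$ in a trivial vertical band (connecting an arc of the source knot to an arc of the target knot) and replaces that band by the product of a tangle representing $L$ with the interval; the result is a virtual concordance between the two local connect sums. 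Applying this with $L=-K_1$ to the given virtual concordance $K_0\sim K_1$ gives $K_0\#(-K_1)\sim K_1\#(-K_1)$; since $K_1\#(-K_1)$ is classically slice, hence virtually slice by the easy implication, transitivity of virtual concordance shows that $J=K_0\#(-K_1)$ is virtually slice.

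The only delicate point is the compatibility lemma in the last paragraph. Connect sum of virtual knots is not well defined in general, so it is crucial that $L=-K_1$ is classical and therefore can be inserted inside a small standard ball; one must check that this local insertion is independent of the chosen location (which follows since $\Sigma$ and the knot are connected, so the balls are ambiently isotopic) and that the construction goes through in one-parameter families so as to handle concordances rather than just knots. Once this is verified, the corollary follows formally from Theorem~\ref{thm:ClassicVSlice} together with the well-known facts about the classical concordance group: $K\#(-K)$ is slice, a classical knot is slice if and only if it represents $0$, and two classical knots are concordant if and only if their difference is slice.
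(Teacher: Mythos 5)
Your proposal is correct and takes essentially the same route as the paper, whose entire proof is the one-line reduction ``apply Theorem~\ref{thm:ClassicVSlice} to $K_0 \# -K_1$''; you simply make explicit the two standard steps the paper leaves implicit (classical concordance implies virtual concordance, and virtual concordance of $K_0,K_1$ implies virtual sliceness of $K_0\#-K_1$ via local insertion of a classical knot along the concordance). The only refinement I would suggest is to first invoke Lemma~\ref{lem:ConcordanceWellDefined} and transitivity to assume the given virtual concordance runs between the genus-zero representatives, so that your local insertion of $-K_1$ visibly produces the classical connected sums on both ends.
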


\begin{proof}
Given two classical knots $K_0$ and $K_1$, apply the Theorem~\ref{thm:ClassicVSlice} 
to the connected sum~$K_0 \# -K_1$, where
$-K_1$ denotes the mirror image of $K_1$ with its orientation reversed.
\end{proof}

Suppose the classical knot~$K \subset S^2 \times I$ is virtually slice, then we can
find a $3$-manifold~$W$ which is a filling of $S^2$ and a 
slice disk~$\Delta \subset W \times I$ cobounding the knot~$K$. To transfer the slice disk from
$W\times I$ into $D^4$, we construct an embedding of the universal cover~$\wt
W$ into $D^3$.  The universal cover~$\wt W$ will have boundary~$\partial \wt W$ consisting of
many copies of $S^2$. A \emph{compression} of $\wt W$ is 
a smooth embedding~$\phi \colon \wt W \to D^3$ 
which restricts to an orientation-preserving diffeomorphism $S \to S^2$ on one of 
the boundary components~$S$ of $\partial W$.

We will construct compressions of $W$ from compressions of the prime parts of $W$.
\begin{lemma}\label{lem:PrimeNeat}
Let $W$ be a connected, compact, oriented and prime $3$-manifold with boundary~$\partial W \cong S^2$. Then
its universal cover~$\wt W$ admits a compression.
\end{lemma}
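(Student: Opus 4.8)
The plan is to reduce to the closed case by capping off the boundary sphere, to identify the universal cover of the resulting closed prime $3$-manifold by geometrisation, and then to cut out a $3$-ball containing the image of $\wt W$.

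First I would form the closed, connected, oriented $3$-manifold $M := W \cup_{S^2} B^3$. By van Kampen's theorem the inclusion $W \hookrightarrow M$ induces an isomorphism $\pi_1(W) \cong \pi_1(M)$, and $M$ is again prime: an essential splitting sphere of a connected-sum decomposition of $M$ can be isotoped off the capping ball by the usual innermost-circle argument, hence would yield a nontrivial connected-sum decomposition of $W$. Because that map on $\pi_1$ is an isomorphism, the universal cover of $W$ is the pullback of the universal covering $p \colon \wt M \to M$ along $W \hookrightarrow M$, that is, $p^{-1}(W)$; concretely
\[
\wt W \;=\; \wt M \sm \coprod_{g \in \pi_1(M)} \int(B_g),
\]
where the $B_g$ are the disjoint, smoothly embedded closed $3$-balls forming $p^{-1}(B^3)$, so that the boundary spheres of $\wt W$ are the spheres $S_g := \partial B_g$.

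Next I would invoke the Poincaré and geometrisation theorems. Since $M$ is closed, connected, oriented and prime, it is either $S^2 \times S^1$ or irreducible, and therefore $\wt M$ is diffeomorphic to $S^2 \times \RR$, to $S^3$ (when $\pi_1(M)$ is finite), or to $\RR^3$ (when $M$ is aspherical). Each of these three model spaces admits a smooth embedding onto an open subset of $S^3$ with finite complement, and after composing with a reflection if necessary I may take such an embedding $\iota \colon \wt M \hookrightarrow S^3$ to be orientation-preserving.

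Finally I would extract the compression. Fix one index $g_0$. Since $\iota(B_{g_0})$ is a smoothly embedded closed $3$-ball in $S^3$, the smooth Schoenflies theorem in dimension three shows that its complement $\mathbf D := S^3 \sm \int(\iota(B_{g_0}))$ is again a smoothly embedded closed $3$-ball, with $\partial \mathbf D = \iota(S_{g_0})$. As $\iota(\wt W) \subset \mathbf D$, composing $\iota|_{\wt W}$ with an orientation-preserving diffeomorphism $\mathbf D \cong D^3$ yields a smooth embedding $\phi \colon \wt W \to D^3$ which restricts on the boundary component $S_{g_0}$ to an orientation-preserving diffeomorphism $S_{g_0} \to \partial D^3 = S^2$; this $\phi$ is a compression. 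The only substantial input is the identification of $\wt M$ with one of the three models, which is exactly where geometrisation is used: the finite-$\pi_1$ case relies on elliptisation (the Poincaré conjecture), and the aspherical case on the consequence of geometrisation that the universal cover of a closed aspherical $3$-manifold is $\RR^3$. Everything else is routine bookkeeping with covering spaces; the only point that needs care is matching the boundary orientation of $S_{g_0}$ inside $\wt W$ with that of $\partial D^3$ inside $D^3$, which is handled by the freedom to precompose with a reflection.
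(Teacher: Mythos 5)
Your proof is correct and follows essentially the same route as the paper: cap the boundary sphere with a ball, identify the universal cover of the resulting closed prime manifold as $S^3$, $S^2 \times \mathbb{R}$ or $\mathbb{R}^3$ via geometrisation, embed it in $S^3$, and delete a lifted copy of the capping ball to obtain the compression into $D^3$. The only differences are bookkeeping: the paper cites Hass--Rubinstein--Scott explicitly for the non-geometric (toroidal) aspherical case, which your appeal to the fact that the universal cover of a closed aspherical $3$-manifold is $\mathbb{R}^3$ implicitly subsumes, and it arranges the lifted ball to be the standard one by post-composing with a diffeomorphism rather than invoking Schoenflies.
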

\begin{proof}
We can fill the boundary component of $W$ with a $3$-ball~$B$ and 
obtain a closed $3$-manifold~$W \cup B$.
The universal cover of $W \cup B$ is diffeomorphic to one of the 
manifolds $S^3$, $S^2 \times \RR$ or $\RR^3$. If $W \cup B$ is a geometric piece 
in the sense of Thurston, this
can be deduced from geometrisation and checking each geometry~\cite[Section 5]{Sc83}.  
If $W$ contains an incompressible torus, its universal cover is
diffeomorphic to the space~$\RR^3$~\cite[Theorem 1]{HRS89}.

As $S^2 \times \RR$ and $\RR^3$ embed into $S^3$, we may assume that we have an embedding of 
$\wt {W \cup B}$ into $S^3$. By post-composing with a diffeomorphism,
we may assume that a lift $B'$  of $B$ is mapped to the standard $3$-ball $D^3 \subset S^3$.
Denote the boundary of $B'$ by $S$. We have the following chain of embeddings 
\[ \wt W \subset \wt {W \cup B} \sm B' \subset S^3 \sm B' = D^3, \]
which gives a compression of $\wt W \subset D^3$.
\end{proof}
\begin{lemma}\label{lem:NeatExists}
Let $W$ be a connected, oriented, compact $3$-manifold with boundary~$\partial W \cong S^2$. Then
its universal cover $\wt W$ admits a compression.
\end{lemma}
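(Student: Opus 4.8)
The plan is to reduce the general case to the prime case already handled in Lemma~\ref{lem:PrimeNeat} by using the prime decomposition of $W$ and an inductive gluing argument. First I would appeal to the prime decomposition theorem for compact oriented $3$-manifolds: since $W$ has boundary $\partial W \cong S^2$, which is a sphere, we can write $W$ as a connected sum $W \cong P_1 \# \cdots \# P_n$ of prime pieces, where each $P_i$ is a closed prime $3$-manifold except for one distinguished piece, which we may take to carry the boundary sphere and hence also have boundary $\cong S^2$. (Equivalently, one chooses a maximal system of disjoint embedded essential $2$-spheres in $W$, cuts along them, and caps off with $3$-balls; the distinguished piece is the one containing $\partial W$, and capping off that sphere too makes every $P_i$ closed.) I would then induct on $n$, the case $n = 1$ being Lemma~\ref{lem:PrimeNeat} (with the minor point that capping $\partial W$ in the distinguished prime piece and then re-removing a ball lands us exactly in the situation of that lemma).

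For the inductive step, suppose $W \cong W' \# P$ where $W'$ has boundary $S^2$ and the statement is known for $W'$, and $P$ is a closed prime $3$-manifold to which Lemma~\ref{lem:PrimeNeat} applies after removing an open ball. The key point is to understand the universal cover $\wt W$ in terms of those of the pieces. By the theory of covering spaces of connected sums (the Kneser–Milnor picture, or equivalently the fact that $\pi_1(W) \cong \pi_1(W') * \pi_1(P)$), the universal cover $\wt W$ is built from a tree of copies of $\wt{W'}$ and $\wt P$ minus balls, glued along lifts of the connect-sum sphere, one copy of each piece for each coset of the corresponding free factor. Concretely, $\wt W$ deformation retracts onto a construction where copies of $\wt{W'}\sm(\text{balls})$ and $\wt P \sm(\text{balls})$ are attached to one another along $S^2$'s according to the Bass–Serre tree of the free product. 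Each of these pieces embeds into $D^3$ by the inductive hypothesis and Lemma~\ref{lem:PrimeNeat}, each such embedding hitting a distinguished boundary $S^2$ by a diffeomorphism onto $\partial D^3$.

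The heart of the argument, and the step I expect to be the main obstacle, is then a purely topological "nesting" or "Schottky-type" assembly: given countably many copies of $D^3$, each with finitely or countably many disjoint open balls removed, one must embed the whole tree-like union into a single $D^3$ by recursively shrinking and inserting each new piece into one of the removed balls of the piece to which it is glued, matching up the distinguished $S^2$. This requires care to ensure (i) the shrinking can be done so that the nested balls converge to a point (or a tame Cantor set) and the resulting map is a smooth embedding rather than merely continuous — one wants to avoid wild behaviour at the limit, which is arranged by choosing the radii of successive generations of balls to shrink geometrically; (ii) the one compression of $\wt W$ we ultimately produce restricts to a diffeomorphism on the correct boundary sphere $S$, namely the lift of $\partial W$ sitting in the distinguished copy of $\wt{W'}$, which we place as the outermost piece mapping onto $\partial D^3$. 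Organising the bookkeeping of this infinite nested gluing so that smoothness is preserved is the delicate part; everything else is formal. Once the embedding $\phi\colon \wt W \to D^3$ is built with $\phi|_S$ an orientation-preserving diffeomorphism onto $S^2 = \partial D^3$, we have exhibited the required compression.
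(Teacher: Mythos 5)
Your overall strategy is the same as the paper's: decompose $W$ along a system of spheres into prime pieces, apply Lemma~\ref{lem:PrimeNeat} to each piece, and assemble the resulting embeddings inside $D^3$ by repeatedly inserting the next piece into a ball bounded by the image of a gluing sphere (using path-connectedness of $\Diff^+(S^2)$ to match along that sphere). However, the step you yourself single out as ``the main obstacle'' --- organising the infinite, tree-indexed nested gluing into a single smooth embedding --- is exactly the content of the lemma, and you do not actually carry it out. Note also that your framing as an induction on the number $n$ of prime summands cannot organise the argument: the universal cover interleaves infinitely many copies of \emph{all} the pieces along the Bass--Serre tree, so there is no finite induction that terminates; what is needed is precisely a mechanism for the infinite assembly. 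The paper supplies this mechanism by lifting the decomposition spheres to $\wt W$, defining ``chunked'' submanifolds (intersections of half-spaces cut out by the lifted spheres), partially ordering the compressions defined on chunked submanifolds containing a fixed boundary sphere $T$, and taking a maximal element: if the maximal domain $B$ were not all of $\wt W$, an elementary extension $B\cup_C P$ could be compressed by embedding $P$ (via Lemma~\ref{lem:PrimeNeat}) into the ball bounded by $\phi(C)$ whose interior misses $\phi(B)$, contradicting maximality.

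Relatedly, the analytic difficulty you flag --- choosing radii shrinking geometrically so that the infinitely nested balls ``converge'' tamely and the limit map is smooth --- is a red herring, and worrying about it obscures why the lemma is true. A compression is only required to be a smooth embedding of $\wt W$ itself; it never has to extend to a compactification or to the closure of its image. Since $\wt W$ is exhausted by the partial domains, the union of a compatible family of partial compressions is automatically smooth and injective, and the accumulation points of the image (deep in the nested balls, or ``at infinity'' of a noncompact piece) lie outside the image, so being a homeomorphism onto the image is not threatened; no metric control or taming of a limiting Cantor set is needed, and possible wildness of the closure of the image is irrelevant. This is exactly what the paper's maximal-element argument encodes without ever forming a limit. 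So: right picture and right building blocks, but the decisive combinatorial step is missing in your write-up, and the difficulty you propose to resolve in its place is not the real one.
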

\begin{proof}
We fix a prime decomposition~$S:=\{S_i\}$ of the $3$-manifold~$W$. This
is a finite collection~$S$ of disjointly embedded separating $2$-spheres~$S_i$ such that $2$-surgery on these
spheres gives a $3$-manifold whose components $W_1, \ldots, W_k$ are all prime $3$-manifolds.

After relabeling, we may assume that $W_1$ has boundary~$\partial W_1 \cong S^2$.
Take $\pi\colon \wt W \to W$ to be a universal cover.
The components of the preimages~$\pi^{-1}(S_i)$ are again $2$-spheres, which form the
collection $\wt S$. The spheres~$C \in \wt S$ are again separating: each sphere~$C$
cuts~$\wt W$ into two half-spaces. 
Given an orientation~$\sigma$ on the sphere~$C$, there is a unique half-space
$C^\sigma$ whose boundary orientation on $C$ is $\sigma$. To any subset 
\[ I \subseteq \left\{ (C, \sigma) \mid C \in \wt S \text{ and } \sigma \text{ an orientation on } C \right\} \] 
we can associate the submanifold 
 ${\bigcap_{(C, \sigma) \in I}} C^\sigma$, which is
an intersection of half-spaces of $\wt W$.
We call a submanifold $B \subseteq \wt W$ \emph{chunked} if $B ={\bigcap_{(C, \sigma) \in I}} C^\sigma$ for a subset $I$.
If $B$ is chunked, then its boundary components are contained in $\wt S$ or in the boundary $\partial \wt W$ of $\wt W$ itself.
Note that if $I$ is empty, then $\bigcap_{I} C^\sigma=\wt W$, thus $\wt W$ is chunked.

Given a chunked submanifold~$B$ and a boundary sphere~$C \in \wt S$ of $B$, there
is a unique smallest chunked submanifold $B' \supset B$ such that $C$ is in the interior of $B'$.
It is of the form $B' = B \cup_C P$ for a universal cover $P$ of a prime $3$-manifold.
We call $B'$ an \emph{elementary extension} of $B$ along $C$.

Fix a boundary component~$T \subset \partial \wt W$. Consider the following set
\[ Z:= \left\{ \phi \colon B \to D^3 \mid T \subset B, \phi(T) = S^2, B \text{ is chunked, and $\phi$ is a compression} \right\}.\]
We give $Z$ the partial order of the poset of maps, i.e. for $\phi \colon B \to D^3$
and $\phi' \colon B' \to D^3$, we declare $\phi' \geq \phi$ if and only if
$B \subset B'$ and $\phi'$ restricts to $\phi$.

By Lemma \ref{lem:PrimeNeat}, the set $Z$ is non-empty. Also totally ordered chains have
a maximal element, so $Z$ has a maximal element. Let $\phi \colon B \to D^3$ be maximal. 
We claim $B = \wt W$, which proves the lemma.

Pick a boundary sphere~$C \in \wt S$ of $B$ and denote by $B' = B \cup_C P$ the elementary extension of $B$ along $C$.
We construct a compression of $B'$ restricting to $\phi$. Consider $\phi(C) \subset D^3$. It is 
a smoothly embedded $2$-sphere in $D^3$. 
It separates the $3$-ball~$D^3$ into two components: an annulus and another $3$-ball~$D'$. Consequently,
the interior of the $3$-ball~$D'$ is disjoint from the image of $\phi(B)$. By
Lemma~\ref{lem:PrimeNeat}, we can embed $\phi_P \colon P \to D'$. As $\Diff^+(S^2)$ 
is path-connected, we can make $\phi_P$ agree with $\phi$ on $C$ and thus we obtain a compression
\[ \phi \cup_C \phi_P \colon B' \to D^3 \]
extending $\phi$.
\end{proof}

Using the compression of Lemma~\ref{lem:NeatExists}, 
we show how to transfer a slice disk for a virtually slice classical knot to the 4-ball.
\begin{proof}[Proof of Theorem~\ref{thm:ClassicVSlice}]
Let $K$ be a classical knot which is virtually slice.
By definition, the knot~$K$ is embedded in a thickened $2$-sphere and
there is a filling~$W$ of $S^2$ together with a slice disk~$\Delta \subset W \times I$
cobounding the knot $K$ in the boundary~$\partial W \times I$.

Let $\wt W \to W$ be a universal cover
and $\phi \colon \wt W \to D^3$ be a compression which exists by Lemma~\ref{lem:NeatExists}.
Let $T \subset \partial \wt W$ be a boundary sphere which is mapped via $\phi$ to the boundary of $D^3$. 
The product map $\wt W \times I \to D^3 \times I$ is also covering map.
As the slice disk $\Delta$ is contractible, it lifts to a disk $\wt \Delta \subset \wt W \times I$
with boundary $\partial \wt \Delta \subset T \times I$. Note that $\partial \wt \Delta$ is
still the knot~$K$.

Now $\phi( \Delta ) \subset D^3 \times I \cong D^4$ is a slice disk for $K$.
\end{proof}

\section{The virtual knot concordance group}\label{section3}
In this section, we  introduce concordance of long virtual knots and the virtual 
knot concordance group~$\vConG$. 
We then apply Theorem~\ref{thm:ClassicVSlice} to deduce injectivity of the natural 
homomorphism~$\ConG \to \vConG$, where $\ConG$ is the classical concordance group.
\subsection{Long virtual knots}

The group operation in $\ConG$ and $\vConG$ is given by connected sum. For round virtual knots, this operation is not well-defined because it depends on the choice of diagram and  on where the diagrams are connected.
These ambiguities disappear if one instead works with long virtual knots.
  
Recall that a long virtual knot diagram is a regular immersion of $\RR$ in the plane
$\RR^2$ which is identical with the $x$-axis outside a compact set, which we will principally take to be the closed ball $B_0(R)$ of radius $R$ centered at the origin.
Double points of the immersion can occur only inside $B_0(R),$ and each one is
labelled either classical or virtual, indicated as before with an over- or
undercrossing if classical or by encircling the crossing if virtual. 
Two such diagrams are equivalent if one can be related to the other by
a compactly supported planar isotopy and a finite sequence of generalised
Reidemeister moves. A \emph{long virtual knot} $K$ is defined to be an equivalence class of long
virtual knot diagrams.
We call the long knot given by the $x$-axis the \emph{long unknot}.
Note that by convention, all long virtual knots are oriented from left to right.

The connected sum of two long virtual knots $K_0$ and $K_1$, denoted $K_0 \# K_1$, is defined by concatenation with $K_0$ on the left and $K_1$ on the right. 
It is easy to verify that long virtual knots form a monoid under connected sum with identity given by the long unknot.

\begin{remark}
The connected sum on long virtual knots is not a commutative operation~\cite[Theorem 9]{Ma08}.
\end{remark}

\subsection{The virtual knot concordance group}

We now extend the notion of virtual concordance to long virtual knots, following Kauffman \cite{Ka15}. 
\begin{definition}
\begin{enumerate}
\item Two long knots~$K_0$ and $K_1$ are \emph{virtually concordant} if one can
be obtained from the other by generalised Reidemeister moves and a finite
sequence of saddle moves, births, and deaths such that the number of saddle
moves equals the sum of births and deaths. 
\item A long virtual knot is \emph{virtually slice} if it is virtually concordant to the long unknot. 
\end{enumerate}
\end{definition}

We will use $[K]$ to denote the concordance class of a long virtual knot $K$ and  
\[ \vConG = \{[K] \mid \text{$K$ is a long virtual knot} \}\] 
for the set of concordance classes of long virtual knots. It is immediate from the definition
that the concordance class of the connected sum $K_0 \#K_1$ depends only on the
concordance classes of $K_0$ and $K_1$. This shows  that the operation of
connected sum descends to a well-defined operation on $\vConG$. 
Thus $(\vConG,\#)$ is a monoid. 

Turaev observes that $(\vConG,\#)$ is actually a group~\cite[Section 5.2]{Tu08}. 
Just as with classical knots, the inverse of $[K]$ is obtained by
taking the mirror image and reversing the orientation. Specifically, given a
long virtual knot $K$,  let $K^m$ be the long virtual knot obtained by
reflecting $K$ through the vertical line $x=R$, and let $-K$ be the
result of reversing the orientation of $K^m$.
Chrisman~\cite[Theorem 1]{Ch16} proves that $K \# -K$ is virtually
slice, and thus it follows that $[-K]$ is the inverse of $[K]$ in
$(\vConG,\#)$.

Given a long virtual knot $K$, let $\overline{K}$ denote its closure. Thus, $\overline{K}$ is the round virtual knot obtained by discarding the parts of $K$ outside the closed ball $B_0(R)$ and joining the points $(R,0)$ to $(-R,0)$ on $K$ with the semicircle $(R \cos(\th), -R \sin(\th)) \subset \RR^2$ for $0 \leq \th \leq \pi$.
 
\begin{lemma} \label{lem:virtual-slice-equivalence}
A long virtual knot $K$ is virtually slice if and only if its closure $\overline{K}$ is virtually slice.
\end{lemma}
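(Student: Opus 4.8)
The statement has two directions, and I would treat them separately. The forward direction is the easy one: if the long virtual knot $K$ is virtually slice, then by definition there is a sequence of generalised Reidemeister moves, saddles, births, and deaths (with matching count) taking $K$ to the long unknot. This same sequence, performed inside the ball $B_0(R)$ and followed by the closure operation, takes $\overline{K}$ to the closure of the long unknot, which is the round unknot; hence $\overline{K}$ is virtually slice. One must be slightly careful that all the moves can be taken to be compactly supported inside $B_0(R)$ so that closing up commutes with the concordance, but this is immediate from the definition of long virtual knot diagrams and of long virtual concordance.

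The reverse direction is the substantive one. Suppose $\overline{K}$ is virtually slice. I would work on the geometric side, using the correspondence between virtual knot diagrams and knots in thickened surfaces together with Turaev's Definition~\ref{defn:VirtualSlice}. Realise $\overline{K}$ as a knot in $\Si \times I$ for the Carter surface $\Si$ of the chosen diagram; virtual sliceness gives a connected $3$-manifold $W$ with $\partial W \cong \Si$ and a slice disk $\De \subset W \times I$. The key observation is that the long knot $K$ differs from $\overline{K}$ only in a standard way: $K$ is obtained by cutting $\overline{K}$ open along an arc of the closure semicircle that meets no crossings, so there is an embedded arc $\alpha \subset \Si$, lying in the closure region and disjoint from all crossings, along which one opens up. I would then push the obvious "trivial band" running parallel to $\alpha$ and to a vertical arc in $\partial W \times I$ through to the other side: cutting $W \times I$ along a properly embedded $3$-ball sitting over a neighbourhood of $\alpha$, and correspondingly cutting the slice disk $\De$ along a trivial band, produces a slice \emph{disk} for $K$ viewed as a long knot in the appropriate thickened-surface-with-boundary setting. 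In diagrammatic terms this is even cleaner: the sequence of saddles/births/deaths exhibiting $\overline{K}$ as slice can be isotoped off the closure arc, after which exactly the same sequence witnesses $K$ as virtually slice, because cutting the round diagram open along that arc and cutting every intermediate diagram open along the corresponding arc is a consistent operation compatible with all the moves.

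\textbf{Main obstacle.}
The delicate point is the interaction between the closure arc and the concordance: a priori the saddle moves, births, and deaths exhibiting $\overline{K}$ as slice, and the intermediate Reidemeister moves, may be scattered all over the diagram, including across the closure semicircle. The real work is to show that one may always isotope the \emph{entire} concordance movie so that it is supported away from a fixed arc of the closure region — equivalently, that the closure semicircle can be kept fixed throughout. I expect this to follow from a standard general-position / finger-move argument (each saddle, birth, death, and Reidemeister move occupies a compact region, and there are finitely many of them, so one can push the whole movie off a chosen arc), but making it precise is where the care is needed. Once that is done, cutting open along that arc turns the round concordance into a long concordance and vice versa, and both directions follow. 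An alternative, perhaps cleaner, route is to do the cut-open argument purely on the thickened-surface side using the slice disk $\De$ and a properly embedded arc in $W$, which avoids bookkeeping the movie at the cost of a small amount of $4$-manifold topology; I would present whichever of the two is shorter.
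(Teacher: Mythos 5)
Your forward direction is exactly the paper's: choose $R$ so that the entire movie is supported in $B_0(R)$ (planar isotopies being compactly supported), and then the same births, deaths, and saddles close up to a null-concordance of $\overline{K}$. The gap is in the reverse direction, at precisely the point you flag as the ``main obstacle''. Keeping a fixed arc of the closure semicircle untouched throughout a slicing movie of $\overline{K}$ is not a general-position or finger-move statement: the arc is part of the moving knot, and in an arbitrary concordance the component containing it may be split by saddles and killed by a death, with the final unknot arising from a birth elsewhere, so no amount of small isotopy makes that arc persist frame by frame; re-routing the concordance so that it does persist is essentially the content of the lemma itself. The geometric variant has the same difficulty in different clothing: a properly embedded $3$-ball lying over a neighbourhood of your arc $\alpha$ generically meets the interior of the slice disk $\Delta\subset W\times I$ in a $1$-manifold ($2+3-4=1$), so $\Delta$ need not meet the region over $\alpha$ in a trivial band and you give no argument for removing these intersections; moreover, to deduce long virtual sliceness, which the paper defines diagrammatically, from such a cut-open geometric picture you would also need a long-knot analogue of the diagrammatic/geometric equivalence of \cite{CKS02}, which is not established in the paper.

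The paper's proof of this direction avoids repositioning the given concordance altogether. Represent $K$ by a long diagram equal to the $x$-axis outside $B_0(R)$, translate this diagram vertically, and join its two endpoints down to the $x$-axis by vertical segments; a single saddle move then exchanges the vertical segments for horizontal ones, producing a two-component picture consisting of the trivial long knot together with a disjoint round copy of $\overline{K}$. Capping that round component with the hypothesised null-concordance of $\overline{K}$ leaves the trivial long knot, and the birth/death/saddle count still balances, so $K$ is virtually slice. If you wish to keep your cut-open strategy, you must prove the persistence-of-the-arc claim honestly; the one-saddle splitting trick is the standard way to avoid having to do so.
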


\begin{proof}
Suppose $K$ is virtually slice. Then there is a finite sequence of births, deaths, and saddles and planar isotopies taking $K$ to the trivial long knot. We can choose $R$ sufficiently large so that all births, deaths, and saddles take place in the ball $B_0(R)$. Since planar isotopies are compactly supported, we can assume that $K$ is unchanged outside of $B_0(R)$.
Thus, the same set of births, deaths, and saddle moves and planar isotopies show that $\overline{K}$ is virtually concordant to the round unknot.

To see the other direction, represent $K$ as a long virtual knot diagram which
coincides with the $x$-axis outside the open ball $B_0(R)$. Construct a new
diagram for $K$ by translating the original diagram vertically and connecting the points $(-R,2R)$ and $(R,2R)$
on the new diagram to the $x$-axis using vertical lines. Now perform a saddle move
and replace the vertical line segments from
$(-R,0)$ to $(-R,R)$ and from $(R,R)$ to $(R,0)$ with the horizontal line
segments from $(-R,0)$ to $(R,0)$ and from $(R,R)$ to $(-R,R)$. This saddle
move transforms $K$ into a 2-component link with one component the trivial
long knot and the other component the round virtual knot $\overline{K}$, which by hypothesis 
bounds a slice disk $\De$. Capping $\overline{K}$ off with $\De$ gives a
virtual concordance from $K$ to the trivial long knot. It follows that $K$ is
virtually slice.
\end{proof}

Recall that for classical knots, the map $K \mapsto \overline{K}$ gives a one-to-one correspondence between long knots and round knots.
From the definition of virtual concordance, one deduces that 
the natural inclusion map from classical knots to virtual knots induces  
a well-defined homomorphism~$\psi \colon \ConG \to \vConG$.
The next result is then an immediate consequence of Theorem~\ref{thm:ClassicVSlice} 
and Lemma~\ref{lem:virtual-slice-equivalence}.
\begin{corollary}
The homomorphism $\psi\colon \ConG \to \vConG$ is injective.
\end{corollary}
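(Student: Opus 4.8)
The plan is to deduce injectivity of $\psi$ from the two facts we already have in hand: Theorem~\ref{thm:ClassicVSlice}, which says a classical knot is virtually slice if and only if it is classically slice, and Lemma~\ref{lem:virtual-slice-equivalence}, which relates sliceness of a long virtual knot to sliceness of its closure. Since $\psi$ is a homomorphism of groups, it suffices to show its kernel is trivial, i.e. if a classical (long) knot $K$ represents an element with $\psi([K]) = 0$ in $\vConG$, then $[K] = 0$ in $\ConG$.

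First I would recall that $\ConG$ is built from long classical knots (or equivalently round classical knots, via the correspondence $K \mapsto \overline{K}$ mentioned just above), and that the zero element of $\ConG$ is the class of slice knots. So suppose $K$ is a long classical knot with $\psi([K]) = [K] = 0$ in $\vConG$; this means, by definition, that $K$ is virtually slice as a long virtual knot. By Lemma~\ref{lem:virtual-slice-equivalence}, the closure $\overline{K}$ is then virtually slice as a round virtual knot. But $\overline{K}$ is a classical knot (it lies in $S^2 \times I \subset S^3$), so Theorem~\ref{thm:ClassicVSlice} applies and tells us $\overline{K}$ is classically slice. Hence $K$ is a slice long classical knot, so $[K] = 0$ in $\ConG$, as desired.

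The one point that needs a sentence of care — and I expect it to be the only real friction — is the compatibility of the various notions of ``virtually slice'': the long-virtual notion used in the definition of $\vConG$, the round-virtual notion of Definition~\ref{defn:VirtualSlice}, and the classical notion. For the long-to-round passage this is exactly the content of Lemma~\ref{lem:virtual-slice-equivalence}; for the identification of Kauffman's diagrammatic virtual concordance with the thickened-surface formulation of Definition~\ref{defn:VirtualSlice} this is the equivalence noted in the text (following Carter--Kamada--Saito), and one should cite it so that Theorem~\ref{thm:ClassicVSlice} may legitimately be invoked. Once these identifications are in place the argument is purely formal. I would write it as: $\psi$ is a group homomorphism, so injectivity is equivalent to triviality of the kernel; if $[K] \in \ker\psi$ with $K$ a classical knot, then $K$ is virtually slice, hence $\overline{K}$ is virtually slice by Lemma~\ref{lem:virtual-slice-equivalence}, hence $\overline{K}$ is slice by Theorem~\ref{thm:ClassicVSlice}, hence $[K] = 0$ in $\ConG$.

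\begin{proof}
Since $\psi$ is a homomorphism of groups, it is enough to show that its kernel is trivial. Let $K$ be a classical knot, which we regard as a long knot, and suppose that $\psi([K]) = 0$ in $\vConG$; that is, $K$ is virtually slice as a long virtual knot. By Lemma~\ref{lem:virtual-slice-equivalence}, the closure $\overline{K}$ is virtually slice. But $\overline{K}$ is a classical knot, so Theorem~\ref{thm:ClassicVSlice} implies that $\overline{K}$ is slice in the classical sense, and therefore $[K] = 0$ in $\ConG$. Hence $\ker\psi$ is trivial and $\psi$ is injective.
\end{proof}
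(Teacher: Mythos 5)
Your proof is correct and follows exactly the route the paper takes: the corollary is deduced by checking the kernel of $\psi$ is trivial, using Lemma~\ref{lem:virtual-slice-equivalence} to pass from the long knot to its closure and Theorem~\ref{thm:ClassicVSlice} to conclude classical sliceness. Your remark about the compatibility of the diagrammatic and thickened-surface notions of virtual sliceness is a reasonable point of care, and it is the equivalence the paper already records via Carter--Kamada--Saito.
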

 
It is an open question whether the concordance group $\vConG$ of long virtual
knots is abelian, see \cite[Section 6.5]{Tu08}. Another interesting open
problem is to determine the structure of $\vConG$, for instance can one
describe the cokernel of the map $\psi$? Does it contain torsion elements? 

Turaev introduces many useful invariants of virtual knot concordance
in~\cite{Tu08}. These include the polynomials $u_\pm(K)$ and the graded genus
$\si(T)$ of the graded matrix $T=(G,s,b)$ associated to a virtual knot $K$. Any
virtual knot $K$ with $u_+(K) \neq 0$ or $u_-(K)\neq 0$ will have infinite
order in $\vConG$ \cite[Proposition 2]{Ch16}. However, if
$K$ is classical, then these invariants vanish, and we view it as an
interesting challenge to derive new invariants of virtual knot concordance to shed light on these questions.

\begin{ackn}
We thank Stefan Friedl for bringing us together and for many fruitful discussions.
The authors would also like to thank Micah Chrisman, Isabel Gaudreau, and Mark Powell for their input and feedback. 

H. Boden is grateful to the University of Regensburg for its hospitality. He was supported by a grant from the Natural Sciences and Engineering Research Council of Canada.
M. Nagel thanks McMaster University for its hospitality. 
He was supported by a CIRGET postdoctoral fellowship
and by SFB 1085 at the University of Regensburg funded by the DFG.
 \end{ackn}

%%%%%%%%%%%%%%%%%%%%%%%%%%%%%%%%%%%%%%%%%%%%%%%%%%%%%%%%%%%%%%%%%%%%%%%%%

%%%%%%%%%%%%%%%%%%%%%%%%%%%%%%%%%%%%%%%%%%%%%%%%%%%%%%%%%%%%%%%%%%%%%%%%%
\bibliographystyle{alpha}

\begin{thebibliography}{ABS111}     


\bibitem[CKS02]{CKS02}
J.~S. Carter, S.~Kamada, and M.~Saito.
\newblock Stable equivalence of knots on surfaces and virtual knot cobordisms.
\newblock {\em J. Knot Theory Ramifications}, 11(3):311--322, 2002.
\newblock Knots 2000 Korea, Vol. 1 (Yongpyong). \MR{1905687}

\bibitem[Ch16]{Ch16}
M.~{Chrisman}.
\newblock {Band-Passes and Long Virtual Knot Concordance}.
\newblock  {2016 ArXiv preprint}. \href{http://arxiv.org/pdf/1603.00267.pdf}{math.GT 1603.00267}

\bibitem[CK15]{CK15}
M.~{Chrisman} and A.~{Kaestner}.
\newblock {Virtual covers of links II}.
\newblock {2015 ArXiv preprint}.  \href{http://arxiv.org/pdf/1512.02667.pdf}{math.GT 1512.02667}


\bibitem[DKK14]{DKK14}
H.~A. Dye, A.~Kaestner, and L.~H. Kauffman.
\newblock Khovanov homology, Lee homology, and a Rasmussen invariant for
  virtual knots.
\newblock {2014 ArXiv preprint}. \href{http://arxiv.org/pdf/1409.5088.pdf}{math.GT 1409.5088}

\bibitem[GPV00]{GPV00}
M.~Goussarov, M.~Polyak, and O.~Viro.
\newblock Finite-type invariants of classical and virtual knots.
\newblock {\em Topology}, 39(5):1045--1068, 2000.  \MR{1763963}

\bibitem[HRS89]{HRS89}
J.~Hass, H.~Rubinstein, and P.~Scott.
\newblock Compactifying coverings of closed {$3$}-manifolds.
\newblock {\em J. Differential Geom.}, 30(3):817--832, 1989.

\bibitem[KK00]{KK00}
N.~Kamada and S.~Kamada.
\newblock Abstract link diagrams and virtual knots.
\newblock {\em J. Knot Theory Ramifications}, 9(1):93--106, 2000. \MR{1749502}

\bibitem[Ka99] {Ka99}
L.~H. Kauffman. 
\newblock{Virtual knot theory}.
\newblock {\em European J. Combin.}, 20(7):663--690, 1999. \MR{1721925}

\bibitem[Ka15]{Ka15}
L.~H. Kauffman.
\newblock { Virtual knot cobordism.} {\em New ideas in low dimensional
  topology,}  335--377, {\em Series on Knots and Everything,}~56,
\newblock World Scientific Publ., Hackensack, NJ, 2015. \MR{3381329}

\bibitem[Ku03]{Ku03}
G.~Kuperberg.
\newblock What is a virtual link?
\newblock {\em Algebr. Geom. Topol.}, 3:587--591 (electronic), 2003. \MR{1997331}

\bibitem[Ma08]{Ma08}
V.~O. Manturov.
\newblock Compact and long virtual knots.
\newblock {\em Tr. Mosk. Mat. Obs.}, 69:5--33, 2008. \MR{2549444}

\bibitem[Sc83]{Sc83}
P.~Scott.
\newblock The geometries of {$3$}-manifolds.
\newblock {\em Bull. London Math. Soc.}, 15(5):401--487, 1983. \MR{0705527} 


\bibitem[Tu08]{Tu08}
V.~Turaev.
\newblock Cobordism of knots on surfaces.
\newblock {\em J. Topol.}, 1(2):285--305, 2008. \MR{2399131}

\bibitem[Wa68]{Wa68}
W.~Waldhausen.
\newblock On irreducible $3$-manifolds which are sufficiently large.
\newblock {\em Ann. of Math. (2)},87:56--88, 1968. \MR{0224099}

\end{thebibliography}

\end{document}